\newtheorem{theorem}{Theorem}[section]
\theoremstyle{definition}
\newtheorem{definition}[theorem]{Definition}
\newtheorem{example}[theorem]{Example}
\theoremstyle{proposition}
\newtheorem{proposition}[theorem]{Proposition}
\theoremstyle{corollary}
\newtheorem{corollary}[theorem]{Corollary}
\numberwithin{equation}{section}
\begin{document}

\title[Lattice path enumeration]%
{Lattice path enumeration for \\ semi-magic squares of size three} 


\author{Robert W. Donley, Jr.}
\address{Department of Mathematics and Computer Science,  Queensborough Community College (CUNY), Bayside, NY 11364, USA}
\curraddr{}
\email{RDonley@qcc.cuny.edu}


\subjclass[2010]{Primary 05E10 05E18; Secondary 15B51 81R05} 
\keywords{3-$j$ symbol, Clebsch-Gordan coefficient, Franel numbers, lattice path, Regge symmetry, semi-magic square, Vandermonde convolution, Wigner symbol, wreath product}

\begin{abstract}  
We give methods for enumerating lattice paths in the graded poset of semi-magic squares of size three.   Two applications of these formulas are given:  an advanced example of Vandermonde convolution for finite graded posets, and a direct method for deriving Regge symmetry formulas for un-normalized Clebsch-Gordan coefficients.
\end{abstract}

\maketitle

\section{Introduction}

Semi-magic squares of size three appear as a fundamental feature in the quantum theory of angular momentum.  When the indices of the $3$-$j$ symbol are reconfigured in square form as the Regge symbol \cite{Re1}, the usual symmetries of the determinant account for all possible symmetries of the indices of the coupled system. While somewhat unexpected at first glance, the semi-magic property arises naturally in the indexing of certain sets of homogenous polynomials of fixed degree.

Extending the ideas of \cite{Do3}, we pursue further combinatorial connections between semi-magic squares and 3-$j$ symbols as Clebsch-Gordan coefficients; other than general background and notational concerns, this work is self-contained until Section 7, where it will be convenient to be familiar with the indexing of \cite{Do1}. 

In this work, a description of semi-magic squares using permutation matrices allows for diagrammatic indexing.   In summary, Section 2 gives an overview of semi-magic squares, noting both algebraic features and  properties from graded posets.  Sections 3 and 4 define and develop the permutation indexing, based on certain actions of wreath products on rectangles. Section 5 presents formulas for lattice path counting, and Sections 6 and 7 apply these formulas.  Section 6 gives an advanced example of Vandermonde  convolution for graded posets developed in \cite{Do4}, and Section 7 gives a direct, efficient derivation of the Regge symmetries, as formulated in \cite{Do1}.  A defining feature of the traditional 3-$j$ symbol is that Regge symmetries act simply by sign changes, a property lost in the un-normalized formulation.
 
The approach in Section 7 is somewhat makeshift. Computing the path formula in multinomial coefficient form, we see that it incorporates a hypergeometric factor of type ${}_3F_2;$  it is well known that Clebsch-Gordan coefficients also do so.  Restoring the variable establishes a direct link.  Combinatorial reciprocity suggests the missing philosophical link, although further interpretation is required.  

\section{Semi-magic squares and the poset $M(r, s)$}

We begin with general features of semi-magic squares.  

\begin{definition} Fix an integer $r\ge 1.$
A square matrix $M$ of size $r$ with non-negative integer entries is called a {\bf semi-magic square} if the sum along any row or column equals the same number $\rho(M)$.  Then $\rho(M)$ is called the {\bf line sum} of $M.$   
\end{definition}

If $k\ge 0$ and $M, N$ are semi-magic squares, then $kM+N$ is a semi-magic square and 
$$\rho(kM+N) = k\rho(M)+\rho(N).$$
We also note that $MN$ is also a semi-magic square with $\rho(MN)=\rho(M)\rho(N),$ although this fact is not needed in this work.

\begin{definition}
Let $M(r)$ be the monoid of all semi-magic squares of size $r$.  For each $s\ge 0,$ define $M(r, s)$ to be the subset of semi-magic squares where each entry is less than or equal to $s.$
\end{definition} 

Every permutation matrix  of size $r$ is a semi-magic square with $\rho(M)=1,$ and it follows from the Birkhoff-von Neumann theorem  (\cite{Bkf}, \cite{VN}) that every element of $M(r)$ with line sum $L$ admits a representation as a sum of $L$ permutation matrices of size $r$.  This representation need not be unique.  In particular, for any $r$, we denote by $J$ be the semi-magic square with all entries equal to 1; the non-uniqueness  of representing $J$ in this way is crucial to what follows.

\begin{definition}   Define a partial ordering on $M(r)$ by $M \le N$ if and only if  $m_{ij} \le n_{ij}$ for all $1\le i, j \le r$.  The line sum $\rho(M)$ gives a rank function  on $M(r)$ with covering relation given by $M \precdot N$ if and only if $N=M+P$ for some permutation matrix $P$ of size $r$.  The unique minimum element of $M(r)$ is $0J$.
\end{definition}

With the induced ordering and rank function,  $M(r, s)$ is a finite graded poset with $\hat{0}$ and $\hat{1}$, as defined in \cite{StE}; we note that
\begin{enumerate}
\item the unique minimum element $\hat{0} = 0J$,
\item the unique maximum element $\hat{1} = sJ,$
\item a dual ordering is induced by the involution $M^* = sJ-M,$ and 
\item $\rho(M^*) = rs-\rho(M).$
\end{enumerate}

For purposes of lattice path counting, we may consider $M(r)$  as a subset of $\mathbb{Z}^{r^2}\subset \mathbb{R}^{r^2}$ in the usual manner for matrices. Here lattice paths start at the origin with allowable steps given by the $r!$ permutation matrices. In the language of graded posets, a lattice path to $M$ corresponds to a maximal chain in the order ideal $I(M)$, where  $I(M)$ is the  induced subposet consisting of elements less than or equal to $M$ in $M(r).$ 

\section{An alternative description of  $M(3)$ and $M(3, s)$}

We now consider the case of $r=3.$  It will be convenient to quantify the non-uniqueness of permutation matrix representations of these semi-magic squares using rectangles associated to a wreath product action.

We fix notation for the permutation matrices of size 3:
$$P_1=P_e = \begin{bmatrix} 1 & 0 & 0 \\ 0 & 1 & 0 \\ 0 & 0 & 1 \end{bmatrix},\qquad P_2=P_{(123)}= \begin{bmatrix}   0 & 0 & 1 \\ 1 & 0 & 0 \\ 0 & 1 & 0\end{bmatrix},\qquad P_3=P_{(132)}= \begin{bmatrix}   0 & 1 & 0 \\ 0 & 0 & 1 \\ 1 & 0 & 0\end{bmatrix},$$

$$P_4=P_{(13)} = \begin{bmatrix} 0 & 0 & 1 \\ 0 & 1 & 0 \\ 1 & 0 & 0 \end{bmatrix},\qquad P_5=P_{(12)}= \begin{bmatrix}   0 & 1 & 0 \\ 1 & 0 & 0 \\ 0 & 0 & 1\end{bmatrix},\qquad P_6=P_{(23)}= \begin{bmatrix}   1 & 0 & 0 \\ 0 & 0 & 1 \\ 0 & 1 & 0\end{bmatrix}.$$

As noted, every element of $M(3)$ with line sum $L$ can be written as a sum of $L$ such matrices. The non-uniqueness of this sum is captured entirely by the dependence relation (or syzygy)
\begin{equation}J\ \ = \ \ P_e + P_{(123)}+P_{(132)}\ \  =\ \ P_{(13)}+P_{(12)}+ P_{(23)}.\end{equation}
Thus, if 
$$M=\sum\limits_{t=1}^6 a_t P_t,$$
then $M$ may be identified with an element of $\mathbb{Z}^6$ with each $a_t\ge 0$ and $\sum\limits_{t=1}^6 a_t = \rho(M).$ The dependence relation requires that, for all $t\ge 0$, 
$$(a_1+t, a_2+t, a_3+t, a_4, a_5, a_6)\ \ \text{and}\ \  \ (a_1, a_2, a_3, a_4 + t, a_5+t, a_6 +t)$$    
represent the same semi-magic square.  Thus we see that $M$ has a unique representative as a sextuple when $a_t=a_u=0$ with at least one $t$ in  $\{1, 2, 3\}$ and at least one $u$ in $\{4, 5, 6\}.$
In general, the following definition determines the convention for uniqueness in this work.

\begin{definition}  Suppose $M$ is represented by a sextuple ${\bf a} = (a_1, a_2, a_3, a_4, a_5, a_6).$ We say that ${\bf a}$ is the  {\bf upshifted} representative for $M$ if some $a_t=0$ for $t$ in $\{4, 5, 6\}$.
\end{definition}

\begin{definition}  Let ${\bf j} = (1, 1, 1, 0, 0, 0).$  Suppose 
$${\bf a} = (m_0, m_0, m_0, 0, 0, 0) + (b_1, b_2, b_3, a_4, a_5, a_6) = m_0{\bf j} + {\bf a}_{red}$$
is the upshifted representative for $M$ and at least one $b_t=0$.  We call ${\bf a}_{red}$ the {\bf reduced (or holey) part} of ${\bf a},$ with corresponding semi-magic square
$M_{red} = M - m_0J.$
\end{definition}
Then $M_{red}$ is uniquely represented by a sextuple, and we have 
$$\rho(M) = 3m_0 + \rho(M_{red}).$$ 
Note that $M=M_{red}$ if and only if at least one entry of $M$ equals zero.

\begin{example}  Suppose ${\bf a} = (2, 3, 4, 0, 1, 0).$  Then 
$$M = 2 P_1 + 3 P_2 + 4P_3 + P_5 =  \begin{bmatrix} 2 & 5 & 3 \\ 4 & 2 & 4 \\ 4 & 3 & 3 \end{bmatrix}$$
and 
$$\rho(M) = 2 + 3 + 4 + 1 = 10.$$
Since  ${\bf a}  = (2, 2, 2, 0, 0, 0) + (0, 1, 2, 0, 1, 0),$
$$M= 2J + M_{red} =  \begin{bmatrix} 2 & 2 & 2 \\ 2 & 2 & 2 \\ 2 & 2 & 2 \end{bmatrix} +   \begin{bmatrix} 0 & 3 & 1 \\ 2 & 0 & 2 \\ 2 & 1 & 1 \end{bmatrix}$$

and  
$$\rho(M) = \rho(2J) + \rho(M_{red}) = 6 + 4 = 10.$$
\end{example}

Finally, we consider the definitions of minimum and  maximum of $M$ with respect to the sextuple representative.  The dictionary (7.4)  shows the sums as  matrix entries.

\begin{proposition}
Suppose $M$ is represented by the sextuple ${\bf a}.$  Then the minimum and maximum entries of $M$ are given by 
$$\min(M) = \min(a_1, a_2, a_3) + \min(a_4, a_5, a_6).$$
and
$$\max(M) = \max(a_1, a_2, a_3) + \max(a_4, a_5, a_6).$$
These values are independent of the representative ${\bf a}$ used for $M$.
\end{proposition}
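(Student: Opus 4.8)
The plan is to write out all nine entries of $M$ explicitly as functions of the sextuple $\mathbf{a}$ and then read the minimum and maximum off directly. First I would expand $M = \sum_{t=1}^{6} a_t P_t$ entrywise, which gives
$$M = \begin{bmatrix} a_1 + a_6 & a_3 + a_5 & a_2 + a_4 \\ a_2 + a_5 & a_1 + a_4 & a_3 + a_6 \\ a_3 + a_4 & a_2 + a_6 & a_1 + a_5 \end{bmatrix}.$$
The decisive observation is that every entry is a sum $a_i + a_j$ with exactly one index $i \in \{1,2,3\}$ and exactly one index $j \in \{4,5,6\}$, and that as $(i,j)$ ranges over the positions of $M$ the pair exhausts each element of $\{1,2,3\}\times\{4,5,6\}$ exactly once; this is visible by inspection of the display. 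Conceptually it holds because $P_1,P_2,P_3$ are the even permutation matrices (the cyclic subgroup $A_3$) while $P_4,P_5,P_6$ are the three transpositions: fixing a position $(k,\ell)$, the three even matrices carry their $1$'s in distinct positions of row $k$, as do the three odd matrices, so exactly one of each type has a $1$ at $(k,\ell)$. Hence $m_{k\ell}$ is a sum of one $a_i$ from the first triple and one $a_j$ from the second, and since two distinct permutation matrices of size $3$ share at most one common $1$, distinct positions yield distinct pairs.

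Given this structural fact, the minimum and maximum are immediate. Because the nine entries are precisely the sums $a_i + a_j$ with $i \in \{1,2,3\}$ and $j \in \{4,5,6\}$ ranging independently, the minimization separates across the two blocks:
$$\min(M) = \min_{\substack{i \in \{1,2,3\}\\ j \in \{4,5,6\}}} (a_i + a_j) = \min(a_1,a_2,a_3) + \min(a_4,a_5,a_6),$$
and likewise $\max(M) = \max(a_1,a_2,a_3) + \max(a_4,a_5,a_6)$, since choosing the optimal $a_i$ imposes no constraint on the optimal $a_j$.

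For the final assertion of independence from the chosen representative I would give two complementary remarks. First, $\min(M)$ and $\max(M)$ are intrinsic to $M$, being simply its smallest and largest entries, so the formulas above, having been derived for an arbitrary representative $\mathbf{a}$, must return the same number for every representative. Alternatively one can check invariance algebraically: as noted before the definition, the non-uniqueness of the sextuple is captured entirely by the syzygy $J = P_e + P_{(123)} + P_{(132)} = P_{(13)} + P_{(12)} + P_{(23)}$, so two sextuples represent the same square exactly when they differ by an integer multiple of $(1,1,1,-1,-1,-1)$. Replacing $\mathbf{a}$ by $\mathbf{a} + c(1,1,1,-1,-1,-1)$ adds $c$ to $\min(a_1,a_2,a_3)$ and subtracts $c$ from $\min(a_4,a_5,a_6)$, leaving the sum fixed, and symmetrically for the maximum.

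I expect the only genuine content to lie in the bijection between the nine positions of $M$ and the nine index pairs in $\{1,2,3\}\times\{4,5,6\}$: once that is in hand, both the min/max formula and its representative-independence follow at once. This step is a finite verification that could simply be read off the explicit matrix, but the cleanest justification is the even/odd permutation structure above, which explains why every cross-pair occurs exactly once rather than leaving it as a numerical accident.
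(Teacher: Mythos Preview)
Your proof is correct and follows essentially the same approach as the paper: both arguments hinge on the observation that the nine entries of $M$ are exactly the nine sums $a_i+a_j$ with $i\in\{1,2,3\}$ and $j\in\{4,5,6\}$, from which the min/max formulas and their representative-independence are immediate. You are simply more explicit---writing out the matrix (which matches the paper's later dictionary~(7.4)), giving the even/odd permutation explanation, and treating independence two ways---whereas the paper compresses the same idea into a single sentence about which $P_i$ share a common $1$.
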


\begin{proof}  Note that, for instance,  $P_1$ shares a common entry of 1 in distinct positions with respect to $P_4$, $P_5$, and $P_6,$  and shares no common entry of 1 in any position with respect to $P_2$ and $P_3.$  Thus, in any linear combination of the $P_i$,  the entries of $M$ are just the pairwise sums of elements $a_i$, with one from each set listed.  The assertions follow immediately.  
\end{proof}

\section{Symmetries and orbits}

We now consider symmetries of $M(3)$ that preserve line sums \cite{RVR}.  The usual symmetries of determinant, generated by row switches, column switches, and transpose, preserve both the semi-magic property and  line sums.  The group of  linear symmetries preserving line sum is generated by such motions and is isomorphic to the  wreath product $G = S_3 \wr \mathbb{Z}/2,$  where $S_3$ is the symmetric group on three letters.

In particular, $|G|=72,$ and every element factors uniquely as $g=R(\sigma)C(\tau)T^\epsilon$ with $\sigma, \tau$ in $S_3$ and $\epsilon$ in $\{0, 1\}$; here $R(\sigma)$ denotes a permutation of rows, $C(\tau)$ a permutation of columns, and $T$ the operation of matrix transpose. Then
$$g\cdot M = P_\sigma M P^{-1}_\tau\ \ (\epsilon=0)\quad\text{or}\quad g\cdot M = P_\sigma M^T P_\tau^{-1}\ \ (\epsilon=1).$$ 

 It will be convenient to further emphasize the wreath product action by the identification
\begin{equation}M\quad \leftrightarrow\quad {\bf a} = (a_1, a_2, a_3, a_4, a_5, a_6) \quad \leftrightarrow \quad 
\begin{ytableau}
\ a_1 & a_2 & a_3\\
\ a_4 &  a_5 & a_6
\end{ytableau}.\end{equation}
Here the single relation (3.1) takes the form
\begin{equation}\begin{ytableau}
\  1 & 1 & 1\\
\ 0 &  0 & 0
\end{ytableau} = \begin{ytableau}
\ 0 & 0 & 0\\
\ 1 &  1 & 1
\end{ytableau}.\end{equation}

On the rectangle, the wreath product action permutes rows and allows any permutation of entries in a given row; thus, $G$ is isomorphic to the subgroup of the symmetric group $S_6$ generated by the three involutions
$$\langle(12),\ (23),\ (14)(25)(36)\rangle.$$ We leave it to the reader to determine the precise correspondence with matrix operations; these can be deduced readily from the examples in Section 6 and the dictionary (7.4).  

If the sextuple is upshifted, then each orbit may be further represented as
$$\begin{ytableau}
\ b_1 & b_2 & b_3\\
\ b_4 &  b_5 & 0
\end{ytableau}$$
such that the filling is non-increasing along rows.   With respect to the decomposition in Section 3,  we have, using upshifted forms,
\begin{equation}M \ =\  m_0 J \ +\ M_{red}  \quad \to \quad 
\begin{ytableau}
\ m_0 & m_0 & m_0\\
\ 0 &   0 &  0
\end{ytableau}\quad 
+\quad
\begin{ytableau}
\ a_1 & a_2 & a_3\\
\ a_4 &  a_5 & a_6
\end{ytableau},
\end{equation}
where the latter term has a zero in each row.
Since the first term, corresponding to $m_0J$, is invariant under the group action, orbit characteristics are entirely determined by the rectangle of the reduced part, which is upshifted by definition. That is, orbit behavior corresponds to the possible rectangles

$$\begin{ytableau}
\ b_1 & b_2 & 0\\
\ b_4 &  b_5 & 0
\end{ytableau}$$
with non-increasing row elements.  As a convention for orbits, we also order rows with non-decreasing number of zeros.  It is illustrative to note that the following distinct representatives are in the same orbit, but the second is the preferred for denoting the orbit: 
$$\begin{ytableau}
\ 1 & 1 & 1\\
\ 1 & 1 & 0
\end{ytableau},\ \ \ \begin{ytableau}
\ 2 & 2 & 1\\
\ 0 &  0 & 0
\end{ytableau}.
$$

In general, Table 1 organizes by orbit representative, size of orbit $o_M$, and stabilizer subgroup.  The dihedral group $D_{2n}$ is the group of symmetries of a regular $n$-sided polygon. For examples of rectangles with corresponding semi-magic square, all orbit types appear in Section 6 at least once.
\vspace{10pt}

\begin{table}
\caption{Orbit sizes for reduced forms (\cite{Do3})}
\label{tab:1}
\begin{tabular}{p{6cm}|p{2cm}|p{3.5cm}}
\hline\noalign{\smallskip}
  &  &  \\
Orbit  Representative $M$

(distinct $a, b, c >0;\ d >0$)
& $|o_M|$ & Stabilizer Subgroup

(first entry) \\
\noalign{\smallskip}\hline\noalign{\smallskip}  

$\begin{ytableau}
\ 0 & 0 & 0\\
\ 0 & 0 & 0
\end{ytableau}$

 & 1 & G\\   \noalign{\smallskip}\hline\noalign{\smallskip}  

$\begin{ytableau}
\ a & a & 0\\
\ 0 &  0 & 0
\end{ytableau}, \ \ \ 
\begin{ytableau}
\ a & 0 & 0\\
\ 0 &  0 & 0
\end{ytableau}$

  & 6 & $\langle(12), (45), (56)\rangle \cong D_{12}$ \\  \noalign{\smallskip}\hline\noalign{\smallskip}  

$\begin{ytableau}
\ a & a & 0\\
\ a &  a & 0
\end{ytableau},\ \ \ 
$\begin{ytableau}
\ a & 0 & 0\\
\ a &  0 & 0
\end{ytableau}$
$

& 9 &  $\langle (12), (45), (14)(25)(36) \rangle$ 

\quad $\cong D_8$

\\   \noalign{\smallskip}\hline\noalign{\smallskip}  

$
\begin{ytableau}
\ a & b & 0\\
\ 0 &  0 & 0
\end{ytableau}$

& 12 &  $\langle(45), (56)\rangle\cong D_6 \cong S_3$ \\   \noalign{\smallskip}\hline\noalign{\smallskip}  

$\begin{ytableau}
\ a & a & 0\\
\ b &  b & 0
\end{ytableau},\ \ \ 
\begin{ytableau}
\ a & a & 0\\
\ d &  0 & 0
\end{ytableau}, \ \ \
\begin{ytableau}
\ a & 0 & 0\\
\ b &  0 & 0
\end{ytableau}$    

& 18 &  $\langle (12), (45)\rangle$ 

\quad $\cong \mathbb{Z}/2 \times \mathbb{Z}/2$ \\   \noalign{\smallskip}\hline\noalign{\smallskip}  

$ 
 \begin{ytableau}
\ a & a & 0\\
\ a &  b & 0
\end{ytableau},\ \ \ 
\begin{ytableau}
\ a & a & 0\\
\ b &  c & 0
\end{ytableau},\ \ \ 
\begin{ytableau}
\ a & b & 0\\
\ d &  0 & 0
\end{ytableau}$

   & 36 &   $\langle (12)\rangle\cong \mathbb{Z}/2$ \\    \noalign{\smallskip}\hline\noalign{\smallskip}  

$\begin{ytableau}
\  a & b & 0\\
\ a &  b & 0
\end{ytableau}$

  & 36 &  $\langle(14)(25)(36)\rangle \cong \mathbb{Z}/2$ \\    \noalign{\smallskip}\hline\noalign{\smallskip}  

All remaining cases    (no symmetry)

& 72  &    \{e\} \\    
  \noalign{\smallskip}\hline\noalign{\smallskip}
\end{tabular}
\end{table}

Next we clarify properties needed to work with $M(3, s).$  As noted in Section 3, the maximum value of $M$ is given by the sum of the maximum values in each row of the rectangle.

\begin{proposition}  Suppose ${\bf a}$ is the upshifted representative for $M$ in $M(3, s)$.  

Define $m_1=\max(a_4, a_5, a_6)$ and $s_1 = s-m_1.$  Then $M^*$ is represented by the upshifted rectangle
\begin{equation}
\begin{ytableau}
\ s_1 & s_1 & s_1\\
\ m_1 &  m_1 & m_1
\end{ytableau}\ \ - \ \ 
\begin{ytableau}
\ a_1 & a_2 & a_3\\
\ a_4 &  a_5 & a_6
\end{ytableau}\ .
\end{equation}
\end{proposition}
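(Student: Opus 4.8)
The plan is to verify the claim directly through the pairwise-sum description of the matrix entries. Recall from the Proposition of Section 3 that if $M$ is represented by ${\bf a}$, then each of the nine entries of $M$ equals $a_i + a_j$ for exactly one pair $(i,j)$ with $i \in \{1,2,3\}$ and $j \in \{4,5,6\}$, and every such pair occurs exactly once. I would first make this correspondence fully explicit by reading off the positions of the $1$'s in $P_1,\dots,P_6$, so that the rectangle-to-matrix dictionary is pinned down; this is precisely the dictionary referred to in the statement.

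Next I would apply this dictionary to the candidate sextuple ${\bf a}^* = (s_1 - a_1,\, s_1 - a_2,\, s_1 - a_3,\, m_1 - a_4,\, m_1 - a_5,\, m_1 - a_6)$. For the entry coming from a pair $(i,j)$, the candidate produces $(s_1 - a_i) + (m_1 - a_j) = (s_1 + m_1) - (a_i + a_j) = s - (a_i + a_j)$, using $s_1 + m_1 = (s - m_1) + m_1 = s$. Since the corresponding entry of $M^* = sJ - M$ is exactly $s - (a_i + a_j)$, the two matrices agree entrywise, so ${\bf a}^*$ represents $M^*$.

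It then remains to check that ${\bf a}^*$ is a legitimate upshifted representative. For the bottom row, $m_1 - a_j \ge 0$ for each $j \in \{4,5,6\}$ because $m_1 = \max(a_4,a_5,a_6)$; moreover $m_1$ is attained by some $a_j$, so the bottom row contains a $0$ and ${\bf a}^*$ is upshifted. Uniqueness of the upshifted representative (a zero in the bottom row pins down the allowed shift coming from the dependence relation) then identifies ${\bf a}^*$ as the upshifted rectangle of $M^*$.

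The one step that genuinely uses the hypothesis $M \in M(3,s)$ — and the only place I expect any friction — is non-negativity of the top row, namely $s_1 - a_i = s - m_1 - a_i \ge 0$ for $i \in \{1,2,3\}$. Here I would invoke the max formula of the Proposition of Section 3: since ${\bf a}$ is upshifted, $\max(a_4,a_5,a_6) = m_1$, so $\max(M) = \max(a_1,a_2,a_3) + m_1$. Because membership in $M(3,s)$ forces $\max(M) \le s$, we obtain $a_i + m_1 \le \max(a_1,a_2,a_3) + m_1 \le s$, hence $s_1 - a_i \ge 0$. Everything else is bookkeeping with the dictionary.
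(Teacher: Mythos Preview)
Your proof is correct and follows essentially the same idea as the paper: verify that the candidate sextuple ${\bf a}^*$ satisfies ${\bf a} + {\bf a}^* = (s_1,s_1,s_1,m_1,m_1,m_1)$, which represents $sJ$, so that ${\bf a}^*$ represents $M^* = sJ - M$. The paper's treatment is informal (a one-sentence explanation about completing the second row to a constant and transferring to the first row, followed by a worked example), whereas you supply the details the paper omits: the entrywise verification via the pairwise-sum dictionary, the check that the bottom row contains a zero (upshiftedness), and the non-negativity of the top row via $\max(M)\le s$. Your route through the matrix entries rather than purely through sextuples is a cosmetic variation, not a different argument.
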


Effectively, to find the rectangle for $M^*$,  one finds the complement of  the rectangle for $M$ with respect to
$$
\hat{1} = sJ =  \begin{ytableau}
\ s & s & s\\
\ 0 &  0 & 0
\end{ytableau}\ .$$
This complement includes terms in the second row that complete to a multiple of $(1, 1, 1),$ which in turn transfers to the first row. That is, the sum of $M$ and $M^*$ should equal $sJ$. 

For example, if  $M$ in $M(3, 7)$  corresponds to 
\begin{equation} \begin{ytableau}
\ 3 & 1 & 1\\
\ 2 &  1 & 0
\end{ytableau}\ ,$$
then
$$\begin{ytableau}
\ 3 & 1 & 1\\
\ 2 &  1 & 0
\end{ytableau}\  +\ 
 \begin{ytableau}
\ 2 & 4 & 4\\
\ 0 &  1 & 2
\end{ytableau}\ = \ 
 \begin{ytableau}
\ 5 & 5 & 5\\
\ 2 &  2 & 2
\end{ytableau}\ =\ 
 \begin{ytableau}
\ 7 & 7 & 7\\
\ 0 &  0 & 0
\end{ytableau}\ .
\end{equation}
\vspace{10pt}

The following proposition is straightforward.

\begin{proposition} As a  finite graded poset, $M(3, s)$ is preserved by $G$.  If $M$ and $N$ belong to the same orbit under $G$, then $M^*$ and $N^*$ also belong to the same orbit under $G$.  The orbits for $M$ and $M^*$ contain the same number of elements.
\end{proposition}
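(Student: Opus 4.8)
The plan is to reduce all three assertions to a single equivariance identity, namely that the complementation involution $M\mapsto M^* = sJ - M$ commutes with the action of $G$ in the sense that $g\cdot(M^*) = (g\cdot M)^*$ for every $g\in G$. I would first dispatch the claim that $M(3,s)$ is preserved by $G$ as a graded poset: each of the generating operations (row permutation, column permutation, and transpose) merely permutes the matrix entries, hence preserves non-negativity, the bound $\le s$, and the semi-magic property, so $g\cdot M\in M(3,s)$. Since the same permutation of positions is applied to both $M$ and $N$, the entrywise order $M\le N$ is preserved, and the line sum $\rho$ --- the rank function --- is invariant under row/column permutation and transpose; thus $G$ acts by rank-preserving poset automorphisms.

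For the equivariance identity, the key observation is that $\hat{1} = sJ$ is a fixed point of the $G$-action. Indeed, every permutation matrix fixes $J$ on the left and on the right, so $P_\sigma(sJ)P_\tau^{-1} = sJ$, while $J^T = J$ handles the transpose factor; writing $g=R(\sigma)C(\tau)T^\epsilon$ then gives $g\cdot(sJ) = sJ$ in both cases $\epsilon\in\{0,1\}$. Because the action $M\mapsto P_\sigma M P_\tau^{-1}$ (and its transposed variant) is linear in $M$, I can compute
\[
g\cdot(M^*) = g\cdot(sJ - M) = g\cdot(sJ) - g\cdot M = sJ - g\cdot M = (g\cdot M)^*,
\]
which is the desired identity.

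The remaining two statements then follow formally. If $N=g\cdot M$ for some $g\in G$, then $N^* = (g\cdot M)^* = g\cdot(M^*)$, so $N^*$ lies in the orbit of $M^*$; hence $M$ and $N$ in a common orbit forces $M^*$ and $N^*$ into a common orbit. For the equality of orbit sizes, the same identity shows that $g$ stabilizes $M$ if and only if it stabilizes $M^*$ (apply the involution to $g\cdot M = M$), so $\mathrm{Stab}(M) = \mathrm{Stab}(M^*)$, and the orbit--stabilizer theorem yields $|o_M| = |o_{M^*}|$; equivalently, $M\mapsto M^*$ restricts to a bijection from the orbit of $M$ onto the orbit of $M^*$.

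There is no serious obstacle here: the content is entirely the linearity of the $G$-action together with the invariance of $sJ$. The only point demanding a moment's care is the transpose component $T$ --- one must check that complementation against the symmetric matrix $sJ$ is compatible with transposition (it is, precisely because $J^T = J$) so that the equivariance identity holds uniformly for all $g\in G$, and not merely for the row/column subgroup $S_3\times S_3$.
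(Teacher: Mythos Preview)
Your argument is correct; the key equivariance identity $g\cdot(M^*)=(g\cdot M)^*$, derived from linearity of the action together with $G$-invariance of $sJ$, is precisely what makes all three assertions work. The paper itself offers no proof beyond declaring the proposition ``straightforward,'' so your write-up simply supplies the routine details the authors omit.
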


See Figures 1 and 2 for the poset $M(3, 1)$ using semi-magic squares and rectangles, respectively.

\begin{figure}
{\tiny \begin{tikzpicture}
  \node (r0) at (0,0) {$\begin{bmatrix}
 0 & 0 & 0\\
 0 &  0 & 0\\
 0 & 0 & 0
\end{bmatrix}$};
  \node (r11) at (-4,1.7){$\begin{bmatrix}
 1 & 0 & 0\\
 0 &  1 & 0\\
 0 & 0 & 1
\end{bmatrix}$};
  \node (r12) at (-2.4, 1.7){$\begin{bmatrix}
 0 & 0 & 1\\
 1 &  0 & 0\\
 0 & 1 & 0
\end{bmatrix}$};  
  \node (r13) at (-.8,1.7) {$\begin{bmatrix}
 0 & 1 & 0\\
 0 &  0 & 1\\
 1 & 0 & 0
\end{bmatrix}$};
\node (r14) at (.8,1.7) {$\begin{bmatrix}
 0 & 0 & 1\\
 0 &  1 & 0\\
 1 & 0 & 0
\end{bmatrix}$};
\node (r15) at (2.4,1.7) {$\begin{bmatrix}
 0 & 1 & 0\\
 1 &  0 & 0\\
 0 & 0 & 1
\end{bmatrix}$};
 \node (r16) at (4,1.7){$\begin{bmatrix}
 1 & 0 & 0\\
 0 &  0 & 1\\
 0 & 1 & 0
\end{bmatrix}$};
  \node (r21) at (-4,3.4) {$\begin{bmatrix}
 1 & 0 & 1\\
 1 &  1 & 0\\
 0 & 1 & 1
\end{bmatrix}$};
  \node (r22) at (-2.4, 3.4) {$\begin{bmatrix}
 1 & 1 & 0\\
 0 &  1 & 1\\
 1 & 0 & 1
\end{bmatrix}$};  
  \node (r23) at (-.8,3.4) {$\begin{bmatrix}
 0 & 1 & 1\\
 1 &  0 & 1\\
 1 & 1 & 0
\end{bmatrix}$};
\node (r24) at (.8,3.4) {$\begin{bmatrix}
 0 & 1 & 1\\
 1 &  1 & 0\\
 1 & 0 & 1
\end{bmatrix}$};
\node (r25) at (2.4,3.4) {$\begin{bmatrix}
 1 & 0 & 1\\
 0 &  1 & 1\\
 1 & 1 & 0
\end{bmatrix}$};
 \node (r26) at (4,3.4) {$\begin{bmatrix}
 1 & 1 & 0\\
 1 &  0 & 1\\
 0 & 1 & 1
\end{bmatrix}$};
  \node (r3) at (0,5.1) {$\begin{bmatrix}
 1 & 1 & 1\\
 1 &  1 & 1\\
 1 & 1 & 1
\end{bmatrix}$};
  \draw  (r0) -- (r11)  (r0) -- (r12)  (r0) -- (r13) 
  (r0) -- (r14)  (r0) -- (r15) (r0) -- (r16)
  (r11) -- (r21) (r11) -- (r22) (r12) -- (r21) 
  (r12) -- (r23) (r13) -- (r22) (r13) -- (r23)
  (r14) -- (r24) (r14) -- (r25) (r15) -- (r24) (r15) -- (r26) (r16) -- (r25) (r16) -- (r26) 
  (r21) -- (r3) (r22) -- (r3)   (r23) -- (r3) (r24) -- (r3)   (r25) -- (r3) (r26) -- (r3) ;
  \end{tikzpicture}}
  \caption{The poset $M(3, 1)$ as semi-magic squares}
  \end{figure}
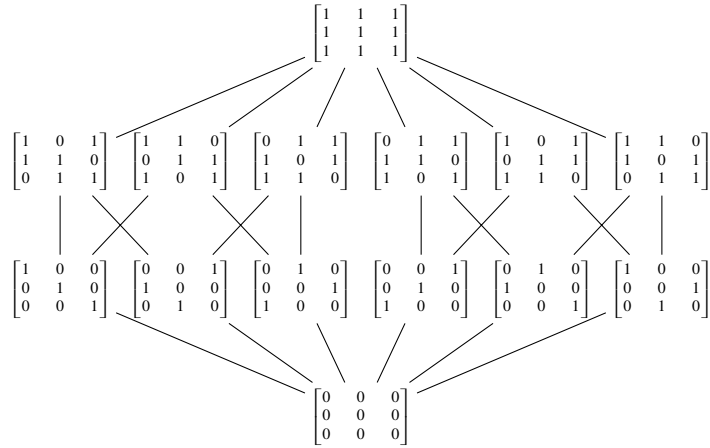
\begin{figure}
{\tiny
  \begin{tikzpicture}
  \node (r0) at (0,0) {$\ \ \begin{ytableau}
\  0 & 0 & 0\\
\ 0 &  0 & 0
\end{ytableau}_{\ 1}$\ \ };
  \node (r11) at (-4,1.7) {\ \ $\begin{ytableau}
\  1 & 0 & 0\\
\ 0 &  0 & 0
\end{ytableau}_{\ 1}$ \ \ };
  \node (r12) at (-2.4, 1.7) {\ \ $\begin{ytableau}
\  0 & 1 & 0\\
\ 0 &  0 & 0
\end{ytableau}_{\ 1 }$\ \ };  
  \node (r13) at (-.8,1.7) {\ \ $\begin{ytableau}
\  0 & 0 & 1\\
\ 0 &  0 & 0
\end{ytableau}_{\ 1}$\ \ };
\node (r14) at (.8,1.7) {\ \ $\begin{ytableau}
\  0 & 0 & 0\\
\ 1 &  0 & 0
\end{ytableau}_{\ 1}$\ \ };
\node (r15) at (2.4,1.7) {\ \ $\begin{ytableau}
\  0 & 0 & 0\\
\ 0 &  1 & 0
\end{ytableau}_{\ 1}$\ \ };
 \node (r16) at (4,1.7) {\ \ $\begin{ytableau}
\  0 & 0 & 0\\
\ 0 &  0 & 1
\end{ytableau}_{\ 1}$\ \ };
  \node (r21) at (-4,3.4) {\ \ $\begin{ytableau}
\  1 & 1 & 0\\
\ 0 &  0 & 0
\end{ytableau}_{\ 2}$ \ \ };
  \node (r22) at (-2.4, 3.4) {\ \ $\begin{ytableau}
\  1 & 0 & 1\\
\ 0 &  0 & 0
\end{ytableau}_{\ 2 }$\ \ };  
  \node (r23) at (-.8,3.4) {\ \ $\begin{ytableau}
\  0 & 1 & 1\\
\ 0 &  0 & 0
\end{ytableau}_{\ 2}$\ \ };
\node (r24) at (.8,3.4) {\ \ $\begin{ytableau}
\  0 & 0 & 0\\
\ 1 &  1 & 0
\end{ytableau}_{\ 2}$\ \ };
\node (r25) at (2.4,3.4) {\ \ $\begin{ytableau}
\  0 & 0 & 0\\
\ 1 &  0 & 1
\end{ytableau}_{\ 2}$\ \ };
 \node (r26) at (4,3.4) {\ \ $\begin{ytableau}
\  0 & 0 & 0\\
\ 0 &  1 & 1
\end{ytableau}_{\ 2}$\ \ };
  \node (r3) at (0,5.1) {$\ \ \begin{ytableau}
\  1 & 1 & 1\\
\ 0 &  0 & 0
\end{ytableau}_{\ 12}$\ \ };
  \draw  (r0) -- (r11)  (r0) -- (r12)  (r0) -- (r13) 
  (r0) -- (r14)  (r0) -- (r15) (r0) -- (r16)
  (r11) -- (r21) (r11) -- (r22) (r12) -- (r21) 
  (r12) -- (r23) (r13) -- (r22) (r13) -- (r23)
  (r14) -- (r24) (r14) -- (r25) (r15) -- (r24) (r15) -- (r26) (r16) -- (r25) (r16) -- (r26) 
  (r21) -- (r3) (r22) -- (r3)   (r23) -- (r3) (r24) -- (r3)   (r25) -- (r3) (r26) -- (r3) ;
  \end{tikzpicture}}
  \caption{The poset $M(3, 1)$ as rectangles, with path numbers}
  \end{figure}
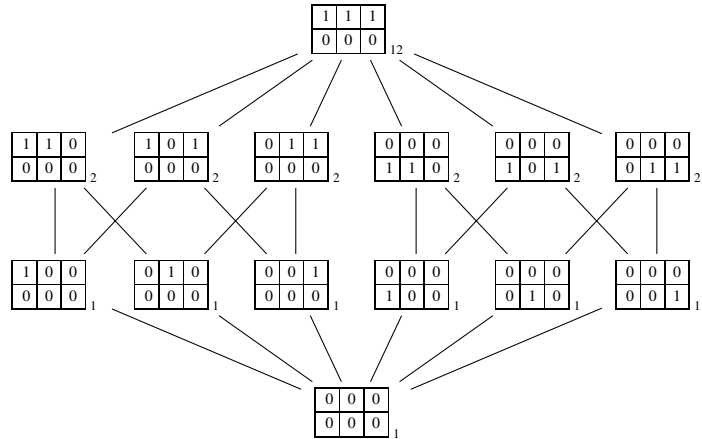

\section{Lattice path enumeration}

With the diagrammatic notation in place, we now consider lattice path enumeration in $M(3).$   Per the usual conventions, multinomial coefficients vanish when the lower index is negative or exceeds the upper index.  For clarity, all parts of the upper index are listed in the lower indices of a multinomial coefficient.

\begin{theorem}  Suppose ${\bf a}$ is the upshifted representative for $M$ in $M(3)$ with $m_0=min(M)$. Then the number of lattice paths in $M(3)$ from $\hat{0}$ to $M$ is given by the path number
\begin{equation}v(M)\ =\ \sum\limits_{t=0}^{m_0} \begin{pmatrix} \rho(M) \\  a_1-t,\ a_2-t,\ a_3-t,\ a_4+t,\ a_5+t,\ a_6+t \end{pmatrix}.\end{equation}
Additionally, $v(M)$ is constant on the orbit for $M$ with respect to  $G$.
\end{theorem}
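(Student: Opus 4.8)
The plan is to reduce the count to a purely combinatorial word problem. First I would observe that a lattice path from $\hat 0 = 0J$ to $M$ is exactly an ordered sequence (a word) $P_{i_1}, P_{i_2}, \dots, P_{i_L}$ in the six permutation matrices, of length $L = \rho(M)$, whose total sum is $M$. The one point needing care is that a maximal chain in $I(M)$ requires every partial sum to lie below $M$ in the ordering; but since each step adds a non-negative matrix and the full sum is $M$, every partial sum equals $M$ minus a sum of permutation matrices and is therefore automatically $\le M$ entrywise. Hence the ``staying below $M$'' constraint is free, and $v(M)$ simply counts all words in the $P_t$ that sum to $M$.

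Next I would stratify these words by the multiset of letters they use, that is, by the coefficient vector $(b_1,\dots,b_6)$ with $M = \sum_t b_t P_t$ and each $b_t \ge 0$. A fixed representation contributes exactly $\binom{\rho(M)}{b_1,\dots,b_6}$ words, the number of arrangements of the multiset, and distinct representations contribute disjoint word-sets, so $v(M)$ is the sum of these multinomials over all non-negative representations. To enumerate the representations I would invoke the structural fact from Section 3, namely that the only dependence among the $P_t$ is the syzygy (3.1): starting from the upshifted representative ${\bf a}$, every non-negative representation has the form $(a_1-t,a_2-t,a_3-t,a_4+t,a_5+t,a_6+t)$. Since upshifting forces $\min(a_4,a_5,a_6)=0$, the proposition in Section 3 computing $\min(M)$ gives $m_0 = \min(a_1,a_2,a_3)$, and non-negativity of the first three entries confines $t$ to $0 \le t \le m_0$ (the last three entries impose no further bound). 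Summing the multinomials over this range yields formula (5.1).

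For the orbit-invariance claim I would argue bijectively rather than from the closed form. Each generator of $G$ --- a row permutation, a column permutation, or the transpose --- sends a permutation matrix to a permutation matrix and fixes the zero matrix. Thus every $g \in G$ permutes the six allowable steps among themselves and fixes $\hat 0$, so applying $g$ letter by letter carries a word summing to $M$ to a word summing to $g\cdot M$, bijectively. Therefore $v(M) = v(g\cdot M)$ for all $g$, and $v$ is constant on $G$-orbits.

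The only genuinely delicate step is the complete classification of non-negative representations: one must know that (3.1) generates \emph{all} relations (equivalently, that the kernel of the map $\mathbb{Z}^6 \to \mathbb{Z}^9$ sending coefficient vectors to matrices is rank one, spanned by $(1,1,1,-1,-1,-1)$) and that the parameter $t$ lands exactly in $[0,m_0]$. The rank-one kernel is precisely what Section 3 asserts, so the residual work is just pinning down the endpoints of the $t$-range via the upshifted normalization. I expect this to be the main obstacle only in the bookkeeping sense. It is worth noting why I avoid deriving invariance from (5.1) directly: the transpose generator exchanges $(a_1,a_2,a_3)$ with $(a_4,a_5,a_6)$ and so alters which sextuple is upshifted, forcing a change of summation variable $t \mapsto m_0 - t$ to reconcile the two sums --- exactly the reindexing that the bijective argument bypasses.
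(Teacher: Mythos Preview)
Your proof is correct and matches the paper's argument: lattice paths are words in $\{P_1,\dots,P_6\}$ of length $\rho(M)$, grouped by their coefficient sextuple, and the syzygy (3.1) parametrizes the $m_0+1$ non-negative representations, each contributing one multinomial term. The only divergence is in the invariance claim, where the paper says it ``follows immediately from the formula'' while you argue bijectively that $G$ permutes the six step directions; both are one-line observations, and your route sidesteps the reindexing $t\mapsto m_0-t$ that you correctly anticipate would be needed under the row-swap generator.
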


\begin{proof} First note that any such path represents a stepwise construction of $M$ by repeatedly adding permutation matrices to the zero matrix. That is, each path from $\hat{0}$ to $M$ is given by a word composed of letters in $\{P_1, \dots, P_6\}$ and of length $\rho(M).$   On the other hand,  each of these words must yield a representative of $M$. There are $m_0+1$ such representatives of the form 
$$(a_1-t,\ a_2-t,\ a_3-t,\ a_4+t,\ a_5+t,\ a_6+t),$$
for which the respective number of paths is given by the corresponding multinomial coefficient.  A given path arises from only one of these forms and then uniquely.

The orbit property follows immediately from the formula.
\end{proof}
In general, we use the upshifted representative for clear indexing.  Since the range of the index covers the nonzero summands, we may extend the sum over all integers.  If we use another representative for $M$, the non-zero summands are merely reindexed from the upshifted case.

Now, because any such word of length $t$ represents a unique path in $M(3)$, we have the following analogue to the row sum formula for Pascal's triangle.

\begin{corollary}  Fix $t\ge 0.$  Then
$$6^t = \sum\limits_{\rho(M)=t} v(M).$$
\end{corollary}
 
Next, by expanding the multinomial coefficients and factoring, we have
\begin{corollary}  Suppose ${\bf a}$ is the upshifted representative for $M$ with $m_0=\min(M).$  Then the number of paths in $M(3)$ from $\hat{0}$ to $M$ equals
\begin{equation}v(M)\ =\ \begin{pmatrix}  \rho(M)\\ a_1,\ a_2,\ a_3,\ a_4,\ a_5,\ a_6\end{pmatrix} \cdot \sum\limits_t q({\bf a}, t),\end{equation} 
where 
\begin{align}
q({\bf a}, t) &= \bigg [\begin{pmatrix} a_1 \\ t\end{pmatrix}\begin{pmatrix} a_2 \\ t\end{pmatrix}\begin{pmatrix} a_3 \\  t\end{pmatrix}\bigg]/ \bigg[\begin{pmatrix} a_4+t \\ t\end{pmatrix}\begin{pmatrix} a_5+t \\ t\end{pmatrix}\begin{pmatrix} a_6+t \\  t\end{pmatrix}\bigg]\\[5pt]
\notag & =  \frac{(a_1)_t\ (a_2)_t\ (a_3)_t}{(a_4+t)_{t} (a_5+t)_{t} (a_6+t)_{t}}
\end{align}

\noindent and $(a)_t = a(a+1)\dots (a+t-1)$ denotes the shifted  factorial.
\end{corollary}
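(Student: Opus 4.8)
The plan is to start from the multinomial-sum formula of the preceding theorem and divide each summand by the $t=0$ term. Writing $\rho=\rho(M)$, I would first expand each multinomial coefficient as $\rho!$ over the product of the factorials of its six lower entries. Since the upper index $\rho$ is the same in every summand, the factor $\rho!$ is common to the whole sum, and the $t=0$ summand is exactly $\binom{\rho}{a_1,\dots,a_6}$. Pulling this out of the sum leaves
\[
v(M)=\binom{\rho}{a_1,\dots,a_6}\sum_t\frac{a_1!\,a_2!\,a_3!\,a_4!\,a_5!\,a_6!}{(a_1-t)!\,(a_2-t)!\,(a_3-t)!\,(a_4+t)!\,(a_5+t)!\,(a_6+t)!},
\]
and I would identify the summand with $q(\mathbf a,t)$, which reduces the corollary to checking that this ratio of factorials equals the two displayed expressions.

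Next I would convert the six factorial ratios into binomial coefficients. For the first three indices, $a_i!/(a_i-t)!=t!\binom{a_i}{t}$, and for the last three, $(a_i+t)!/a_i!=t!\binom{a_i+t}{t}$. Substituting these introduces a factor $t!^3$ in both the numerator and the denominator, which cancels and yields the binomial form, the first line of (5.4). Expanding each binomial coefficient back into a quotient of consecutive products and cancelling the single remaining $t!$ against its partner then produces the shifted-factorial form, the second line of (5.4). Both steps are purely the mechanical cancellation of $\rho!$ and of the powers of $t!$.

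Two bookkeeping points need care. First, I must justify summing over all integers $t$: the factor $\binom{a_i}{t}$ (equivalently $a_i!/(a_i-t)!$) vanishes for $t>a_i$ and is undefined for $t<0$, so the only nonzero terms occur for $0\le t\le\min(a_1,a_2,a_3)$; since $\mathbf a$ is upshifted, one of $a_4,a_5,a_6$ is $0$, whence $\min(M)=\min(a_1,a_2,a_3)=m_0$ by Proposition 3.5, and the effective range is exactly $0\le t\le m_0$, matching the theorem. Second — and this is the only place where a slip is likely — I would carefully match the ascending product $(a_i+t)!/a_i!=(a_i+1)(a_i+2)\cdots(a_i+t)$ in the denominator against the shifted-factorial symbol, verifying that the ascending/descending index conventions agree on both the numerator and denominator factors. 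I expect this index verification to be the genuine, if modest, obstacle; everything else is routine simplification that follows once the $t=0$ term is factored out.
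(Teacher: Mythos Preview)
Your proposal is correct and follows exactly the paper's own approach: the paper simply states that the corollary follows from Theorem~5.1 ``by expanding the multinomial coefficients and factoring,'' which is precisely the computation you outline---factor out the $t=0$ multinomial, cancel the common $\rho!$, and rewrite the resulting factorial ratios as binomials and then as shifted factorials. Your bookkeeping on the summation range via Proposition~3.5 and your flagging of the ascending/descending convention in the shifted-factorial line are both appropriate; nothing further is needed.
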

Since one of $a_4, a_5, a_6$ equals zero, the sum corresponds to a terminating hypergeometric series of type $\,_3F_2$ evaluated at $z=-1.$ We return to this formulation in Section 7. 
\vspace{5pt}

Now consider the  example $M=2J$.  Since ${\bf a} = (2, 2, 2, 0, 0, 0)$,
$$v(2J)\ = \ \sum\limits_t\ \begin{pmatrix} 6 \\  2-t,\ 2-t,\ 2-t,\ t,\ t,\ t \end{pmatrix}\  =\ 90 + 720 + 90\ =\ 900.$$

In general, we have
\begin{corollary}  Suppose $M=sJ.$  Then the number of paths in $M(3)$ from $\hat{0}$ to $M$ equals
\begin{equation}p(s)\ =\ v(sJ)\ =\ \begin{pmatrix} 3s \\  s,\ s, \ s \end{pmatrix}\sum\limits_{t=0}^s \begin{pmatrix} s \\  t\end{pmatrix}^3.\end{equation}
\end{corollary}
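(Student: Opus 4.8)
The plan is to specialize the general path formula from Theorem 5.1 (equation (5.1)) to the element $M=sJ$ and then recognize the resulting multinomial sum as the stated product. First I would observe that for $M=sJ$ the upshifted representative is ${\bf a}=(s,s,s,0,0,0)$, so that $\rho(M)=3s$ and $m_0=\min(M)=s$. Substituting directly into (5.1) gives
\begin{equation}
v(sJ)\ =\ \sum_{t=0}^{s}\begin{pmatrix} 3s \\ s-t,\ s-t,\ s-t,\ t,\ t,\ t\end{pmatrix},
\end{equation}
so the entire task reduces to rewriting the six-part multinomial coefficient in a factored form that pulls out the $t$-independent piece $\binom{3s}{s,s,s}$.

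The key step is an elementary factorization of each summand. Writing the multinomial coefficient as a ratio of factorials,
$$\begin{pmatrix} 3s \\ s-t,\ s-t,\ s-t,\ t,\ t,\ t\end{pmatrix}=\frac{(3s)!}{\big((s-t)!\big)^3\,(t!)^3},$$
I would multiply and divide by $(s!)^3$ to introduce the target factor. This yields
$$\frac{(3s)!}{(s!)^3}\cdot\frac{(s!)^3}{\big((s-t)!\big)^3\,(t!)^3}=\begin{pmatrix} 3s \\ s,\ s,\ s\end{pmatrix}\left(\frac{s!}{(s-t)!\,t!}\right)^{3}=\begin{pmatrix} 3s \\ s,\ s,\ s\end{pmatrix}\begin{pmatrix} s \\ t\end{pmatrix}^{3}.$$
Since the factor $\binom{3s}{s,s,s}$ does not depend on $t$, I can extract it from the sum over $t$, leaving exactly $\sum_{t=0}^{s}\binom{s}{t}^{3}$, which is (5.6) as claimed. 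The appearance of the sum of cubes of binomial coefficients is precisely the Franel-number structure foreshadowed in the keywords and abstract.

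I do not expect a genuine obstacle here, since the result is a direct specialization of an already-proven theorem followed by a one-line factorial manipulation; the only care needed is bookkeeping of index ranges. One point worth noting for rigor is that although (5.1) writes the sum from $t=0$ to $m_0$, the summand vanishes outside $0\le t\le s$ anyway (by the vanishing convention for multinomial coefficients stated before Theorem 5.1), so the truncated sum $\sum_{t=0}^{s}$ is unambiguous and matches the binomial identity. Thus the hardest part is essentially just recognizing the correct factor $(s!)^3$ to insert, after which everything collapses to the Franel sum.
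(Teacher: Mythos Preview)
Your proposal is correct and follows essentially the same route as the paper: the paper treats this corollary as an immediate specialization, implicitly via Corollary~5.3 (the factored form $v(M)=\binom{\rho(M)}{a_1,\dots,a_6}\sum_t q({\bf a},t)$), which for ${\bf a}=(s,s,s,0,0,0)$ gives $\binom{3s}{s,s,s}\sum_t \binom{s}{t}^3$ since $\binom{t}{t}=1$. Your direct factorial manipulation from Theorem~5.1 is exactly the content of Corollary~5.3 in this special case, so there is no substantive difference.
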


The sequence $p(s)$  occurs as A306642 in the OEIS \cite{Slo}, and the factors 
$$F(s) = \sum\limits_{t=0}^s \begin{pmatrix} s \\  t\end{pmatrix}^3$$
are the Franel numbers (A000172).   
\vspace{10pt}

\begin{tabular}{ | p{1.5 cm} | p{.9 cm}  | p{.9 cm} | p{.9 cm} |  p{1 cm} | p{1.7 cm} | p{2 cm}|  }
\hline
	 $n$ &  0 & 1 & 2 & 3 & 4 & 5  \\ \hline
	A000172 & 1  &    2  & 10     & 56      & 346  & 2252 \\ \hline
	A306642 & 1 &   12 & 900   & 94080   & 11988900  & 1704214512 \\
\hline
\end{tabular}
\vspace{10pt}

Now $F(s)$ satisfies the recurrence (\cite{Fr1})
$$(s+1)^2F(s+1)\ =\ (7s^2+7s+2) F(s)  \ + \ 8s^2 F(s-1) ,$$
from which we immediately derive
\begin{corollary}  For $s>0$, $p(s)$ satisfies the three-term recurrence
\begin{eqnarray}
 \notag (s+1)^4p(s+1) &=& 3(3s+2)(3s+1)(7s^2+7s+2)p(s)\\
 &\ & \qquad\qquad\qquad + \quad \ 72(9s^2-4)(9s^2-1)p(s-1).
\end{eqnarray}
\end{corollary}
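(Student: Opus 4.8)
The plan is to treat the recurrence for $p(s)$ as a ``dressed'' version of the Franel recurrence, the dressing being the central multinomial coefficient. Writing $C(s) = \binom{3s}{s,\,s,\,s} = (3s)!/(s!)^3$, the previous corollary gives $p(s) = C(s)\,F(s)$, so $F(s) = p(s)/C(s)$ for every $s\ge 0$ (as $C(s)$ is a positive integer). Substituting $F(s)=p(s)/C(s)$ and $F(s\pm1)=p(s\pm1)/C(s\pm1)$ into the Franel recurrence $(s+1)^2 F(s+1) = (7s^2+7s+2)F(s) + 8s^2 F(s-1)$ and multiplying through by $C(s+1)$ converts it directly into a recurrence for $p$, at the cost of the two ratio factors $C(s+1)/C(s)$ and $C(s+1)/C(s-1)$. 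The entire proof reduces to computing these ratios and simplifying.

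First I would compute the one-step ratio. Since $(3s+3)!/(3s)! = (3s+1)(3s+2)(3s+3)$ and $((s+1)!)^3/(s!)^3 = (s+1)^3$,
$$\frac{C(s+1)}{C(s)} = \frac{(3s+1)(3s+2)(3s+3)}{(s+1)^3} = \frac{3(3s+1)(3s+2)}{(s+1)^2}.$$
Shifting the index down by one gives $C(s)/C(s-1) = 3(3s-1)(3s-2)/s^2$, and multiplying the two yields
$$\frac{C(s+1)}{C(s-1)} = \frac{9\,(3s+1)(3s+2)(3s-1)(3s-2)}{s^2(s+1)^2}.$$

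Next I would substitute these into the converted recurrence. After multiplying the Franel relation by $C(s+1)$ one has $(s+1)^2 p(s+1) = (7s^2+7s+2)\tfrac{C(s+1)}{C(s)}\,p(s) + 8s^2\tfrac{C(s+1)}{C(s-1)}\,p(s-1)$. Inserting the two ratios and clearing the common denominator by multiplying through by $(s+1)^2$ produces
$$(s+1)^4 p(s+1) = 3(3s+1)(3s+2)(7s^2+7s+2)\,p(s) + 72\,(3s+1)(3s+2)(3s-1)(3s-2)\,p(s-1),$$
where the factor $8s^2$ cancels the $s^2$ in the denominator to leave $72$. Finally the identities $(3s+1)(3s-1) = 9s^2-1$ and $(3s+2)(3s-2) = 9s^2-4$ recast the $p(s-1)$-coefficient as $72(9s^2-4)(9s^2-1)$, matching the claimed formula.

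The argument involves no genuine obstacle beyond careful bookkeeping of the rational factors; the only point requiring attention is the boundary. The step $C(s)/C(s-1)$ divides by $s^2$, so one needs $s\ge1$ both for the manipulation to be valid and for $F(s-1)$ to be defined --- exactly the hypothesis $s>0$. I would also remark, for sanity, that the tabulated values $p(0)=1$, $p(1)=12$, $p(2)=900$ are consistent with the resulting relation, although these are not needed for the derivation.
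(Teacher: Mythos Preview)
Your argument is correct and is exactly the derivation the paper has in mind: it states the corollary as something one ``immediately derives'' from the Franel recurrence and the factorization $p(s)=\binom{3s}{s,s,s}F(s)$, and your computation of the ratios $C(s+1)/C(s)$ and $C(s+1)/C(s-1)$ followed by clearing denominators is precisely how that immediate derivation goes. There is nothing to add.
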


Furthermore, the recurrences imply the quotient limits
\begin{equation}\lim\limits_{s\to\infty}\ \frac{F(s)}{F(s-1)} = 8\qquad \text{and}\qquad\lim\limits_{s\to\infty}\ \frac{p(s)}{p(s-1)} = 216.\end{equation}

\section{Vandermonde convolution for finite graded posets}

As a first application of the formula, we consider a version of Vandermonde convolution for the finite graded posets $M(3, s).$   The path numbers give a set of invariants for the poset; these numbers are restricted as solutions to a family of equations, one for each rank of the poset.

In general, let $P$ be a finite graded poset with $\hat{0}$ and $\hat{1}$.  It will be convenient here to assume that $P$ is self-dual, but this assumption is not necessary in general.  For $x$ in $P$, let $v_P(x)$ be the path number of $x$; that is, $v_P(x)$ is the number of directed paths from $\hat{0}$ to $x$ in $P$.  

If we fix a rank level $P_k$ in $P$, then every path from $\hat{0}$ to $\hat{1}$ intersects uniquely with an element of $P_k$, which in turn gives a partition of all such paths, indexed by elements of $P_k.$  

\begin{theorem}[Vandermonde convolution for finite graded posets]  Suppose $P$ has rank $n$.  Let $\rho(x)$ be the rank of $x$ in $P,$ and assume that $P$ is self-dual with involution $x \mapsto x^*.$ Then, for $0\le k \le \rho(\hat{1}),$
\begin{equation}v_P(\hat{1}) = \sum\limits_{x\in P_k}\  v_P(x)v_P(x^*).\end{equation}
\end{theorem}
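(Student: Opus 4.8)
The plan is to establish a bijection between the directed paths from $\hat{0}$ to $\hat{1}$ and, for each fixed rank $k$, the pairs of paths $(\gamma_1, \gamma_2)$ where $\gamma_1$ runs from $\hat{0}$ to some $x \in P_k$ and $\gamma_2$ runs from $\hat{0}$ to $x^*$. First I would fix $k$ with $0 \le k \le n$ and observe that every maximal path $\gamma$ from $\hat{0}$ to $\hat{1}$ has length $n = \rho(\hat{1})$ and passes through exactly one element of rank $k$; indeed, since $P$ is graded, the rank strictly increases by one along each covering step of $\gamma$, so there is a unique vertex $x = x(\gamma) \in P_k$ on $\gamma$. This vertex splits $\gamma$ canonically into an initial segment $\gamma'$ from $\hat{0}$ to $x$ and a terminal segment $\gamma''$ from $x$ to $\hat{1}$. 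Summing over the possible choices of $x$ partitions the set of all maximal paths, which already gives
\begin{equation}
v_P(\hat{1}) = \sum_{x \in P_k} (\text{paths } \hat{0} \to x)\cdot(\text{paths } x \to \hat{1}).
\end{equation}

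The number of initial segments is by definition $v_P(x)$. The key remaining step is to identify the number of terminal segments from $x$ to $\hat{1}$ with $v_P(x^*)$. Here I would invoke self-duality: the involution $y \mapsto y^*$ reverses the order (so covering relations $y \precdot z$ correspond to $z^* \precdot y^*$) and sends $\hat{0} \leftrightarrow \hat{1}$. Applying $*$ to every vertex of a path $x \to \hat{1}$ therefore yields a path $\hat{1}^* = \hat{0} \to x^*$ traversed in the reverse direction, and this assignment is a bijection between paths $x \to \hat{1}$ and paths $\hat{0} \to x^*$. Consequently the number of terminal segments equals $v_P(x^*)$, and substituting into the displayed partition gives exactly the claimed identity.

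The main obstacle — and the only point requiring genuine care — is verifying that the duality $*$ interacts correctly with the notion of a directed path, namely that it is genuinely order-reversing and rank-complementing so that a covering chain maps to a covering chain after reversal. In the setting of $M(3,s)$ this is precisely the content of the structural facts recorded earlier: the involution $M^* = sJ - M$ satisfies $\rho(M^*) = rs - \rho(M)$, and it carries a covering relation $M \precdot N$ (where $N = M + P$ for a permutation matrix $P$) to $N^* = M^* - P \precdot$-below $M^*$, i.e. $M^* = N^* + P$. Thus each allowable step $+P$ in a path to $\hat{1}$ corresponds, under $*$, to the same allowable step $+P$ read backward in a path from $\hat{0}$ to $x^*$, confirming the bijection step-by-step. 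Since the argument never uses anything beyond gradedness and the order-reversing involution, I would state it for a general self-dual finite graded poset and note that $M(3,s)$ qualifies via Proposition referenced above.
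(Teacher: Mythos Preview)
Your proposal is correct and follows essentially the same approach as the paper: partition maximal chains by their unique intersection with the rank level $P_k$, then use the order-reversing involution to identify the number of paths from $x$ to $\hat{1}$ with $v_P(x^*)$. The paper's proof is a single sentence stating the self-duality step, with the partition-by-rank-level observation placed in the paragraph preceding the theorem; your write-up simply expands these points and adds the explicit verification for $M(3,s)$.
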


\begin{proof}  Since $P$ is self-dual,  the number of paths from $x$ to $\hat{1}$ equals the number of paths from $\hat{0}$ to $x^*.$  
\end{proof}

To visualize using the diagram for $P$, one implements this equation as a dot product of path numbers between dual rank levels.

Applied to $M(3, s),$ we have
\begin{corollary}  Fix $s\ge 0,$ and let $P=M(3, s).$  Then, for $0 \le k \le 3s$,
\begin{equation}v(sJ) =  \sum\limits_{\substack{\rho(M)=k\\ max(M) \le s}}\  v(M)v(sJ-M).\end{equation}
\end{corollary}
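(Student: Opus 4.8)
The plan is to derive this corollary as a direct specialization of Theorem 6.1 (Vandermonde convolution for finite graded posets) applied to the specific poset $P = M(3,s)$. The corollary is essentially Theorem 6.1 with the abstract objects made concrete, so the proof should amount to carefully matching the general hypotheses to the features of $M(3,s)$ already established in Sections 2 and 4. First I would verify that $M(3,s)$ satisfies the hypotheses of Theorem 6.1: that it is a finite graded poset with $\hat 0$ and $\hat 1$, and that it is self-dual. The self-duality is exactly the content of item (3) following Definition 2.3, where the involution is given explicitly by $M \mapsto M^* = sJ - M$, with the rank complementarity $\rho(M^*) = 3s - \rho(M)$ coming from item (4) with $r=3$.

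Next I would translate each piece of the abstract formula $v_P(\hat 1) = \sum_{x \in P_k} v_P(x)\,v_P(x^*)$ into the notation of $M(3,s)$. On the left, $\hat 1 = sJ$, so $v_P(\hat 1) = v(sJ)$. On the right, the sum ranges over the rank level $P_k$, which consists of all semi-magic squares $M$ of line sum $\rho(M) = k$ that lie in $M(3,s)$; the membership condition $M \in M(3,s)$ is precisely the requirement that every entry be at most $s$, equivalently $\max(M) \le s$. This justifies the index set $\{M : \rho(M) = k,\ \max(M) \le s\}$ in the displayed formula. Finally, the involution sends $M$ to $M^* = sJ - M$, so $v_P(x^*)$ becomes $v(sJ - M)$, matching the second factor in the product.

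The only genuinely substantive point is to confirm that the path numbers $v_P(x)$ of the abstract theorem coincide with the path numbers $v(M)$ computed in Theorem 5.1. This is immediate from the definitions: in both settings the path number counts directed (maximal, rank-increasing) paths from $\hat 0$ to the given element within the ambient graded poset. I would note, however, one subtlety worth flagging. Theorem 5.1 counts lattice paths in all of $M(3)$, whereas Corollary 6.3 is a statement about paths inside the finite poset $M(3,s)$. These agree because any monotone path from $\hat 0= 0J$ to an element $M$ with $\max(M) \le s$ passes only through squares whose entries are bounded above by those of $M$, hence also bounded by $s$; thus the path never leaves $M(3,s)$, and counting within $M(3,s)$ gives the same value $v(M)$ as counting within $M(3)$. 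This order-ideal observation is the main obstacle to a fully rigorous argument, and I would make it explicit rather than leaving it implicit in the appeal to Theorem 6.1.

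With these identifications in place, the corollary follows by substituting $k$ for the fixed rank level and reading off Theorem 6.1 verbatim. I would keep the proof to a few sentences: state that $M(3,s)$ is self-dual via $M \mapsto sJ - M$ by Section 2, invoke Theorem 6.1 with $\hat 1 = sJ$ and $P_k = \{M : \rho(M) = k,\ \max(M) \le s\}$, and identify $v_P$ with $v$ using the order-ideal remark above.
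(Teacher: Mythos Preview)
Your proposal is correct and follows the same approach as the paper, which simply states the corollary as a direct application of Theorem 6.1 to $M(3,s)$ without further argument. Your treatment is in fact more careful than the paper's, since you explicitly address the point that path counts in $M(3,s)$ agree with those in $M(3)$ via the order-ideal observation.
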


\begin{example}{(Figure 3)}  The poset $M(3, 2)$ is represented in both rectangle and matrix form below.  We orient the diagrams so that rank levels are given as columns with increasing rank.  For the sake of presentation, all elements in a given orbit are listed by a single representative rectangle with subscript denoting the orbit size and common path number.

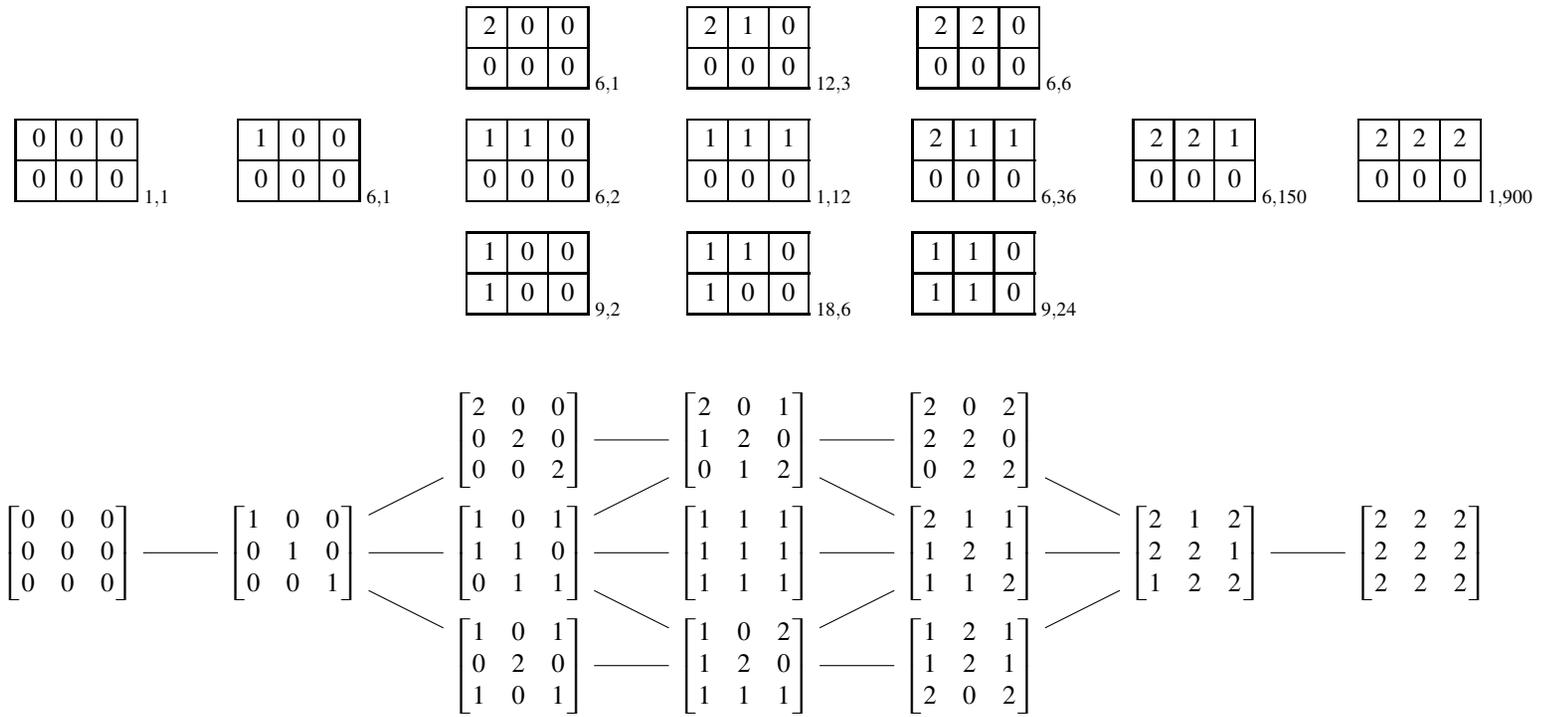
\begin{sidewaysfigure}
{\vspace{350pt}
\begin{tikzpicture}
  \node (min) at (0,0) {$\ \ \begin{ytableau}
\  0 & 0 & 0\\
\ 0 &  0 & 0
\end{ytableau}_{\ 1, 1}$\ \ };
  \node (a) at (3,0) {\ \ $\begin{ytableau}
\  1 & 0 & 0\\
\ 0 &  0 & 0
\end{ytableau}_{\ 6, 1}$ \ \ };
  \node (b) at (6, 0) {\ \ $\begin{ytableau}
\  1 & 1 & 0\\
\ 0 &  0 & 0
\end{ytableau}_{\ 6, 2}$\ \ };  
  \node (c) at (6,1.5) {\ \ $\begin{ytableau}
\  2 & 0 & 0\\
\ 0 &  0 & 0
\end{ytableau}_{\ 6, 1}$\ \ };
  \node (d) at (6,-1.5) {\ \ $\begin{ytableau}
\  1 & 0 & 0\\
\ 1 &  0 & 0
\end{ytableau}_{\ 9, 2}$\ \ };
 \node (e) at (9,0) {\ \ $\begin{ytableau}
\  1 & 1 & 1\\
\ 0 &  0 & 0
\end{ytableau}_{\ 1, 12}$\ \ };
  \node (f) at (9,1.5) {\ \ $\begin{ytableau}
\  2 & 1 & 0\\
\ 0 &  0 & 0
\end{ytableau}_{\ 12, 3}$\ \ };
  \node (g) at (9,-1.5) {\ \ $\begin{ytableau}
\  1 & 1 & 0\\
\ 1 &  0 & 0
\end{ytableau}_{\ 18, 6}$\ \ };
  \node (h) at (12,0) {\ \ $\begin{ytableau}
\  2 & 1 & 1\\
\ 0 &  0 & 0
\end{ytableau}_{\ 6, 36}$\ \ };
  \node (i) at (12,1.5) {\ \ $\begin{ytableau}
\  2 & 2 & 0\\
\ 0 &  0 & 0
\end{ytableau}_{\ 6, 6}$\ \ };
  \node (max) at (12,-1.5) {\ \ $\begin{ytableau}
\  1 & 1 & 0\\
\ 1 &  1 & 0
\end{ytableau}_{\ 9, 24}$\ \ };
  \node (max) at (15,0) {\ \ $\begin{ytableau}
\  2 & 2 & 1\\
\ 0 &  0 & 0
\end{ytableau}_{\ 6, 150}$\ \ };
  \node (max) at (18,0) {\ \ $\begin{ytableau}
\  2 & 2 & 2\\
\ 0 &  0 & 0
\end{ytableau}_{\ 1, 900}$\ \ };
  \end{tikzpicture}\vspace{20pt}
\begin{tikzpicture}
  \node (min) at (0,0) {$\begin{bmatrix}
 0 & 0 & 0\\
 0 &  0 & 0\\
 0 & 0 & 0
\end{bmatrix}$};
  \node (a1) at (3,0) {$\begin{bmatrix}
1 & 0 & 0\\
0 &  1 & 0\\
0 & 0 & 1
\end{bmatrix}$};
  \node (b1) at (6, 0) {$\begin{bmatrix}
1 & 0 & 1\\
1 &  1 & 0\\
0 & 1 & 1
\end{bmatrix}$};  
  \node (b2) at (6,1.5) {$\begin{bmatrix}
2 & 0 & 0\\
0 &  2 & 0\\
0 & 0 & 2
\end{bmatrix}$};
  \node (b3) at (6,-1.5) {$\begin{bmatrix}
1 & 0 & 1\\
0 &  2 & 0\\
1 & 0 & 1
\end{bmatrix}$};
 \node (c1) at (9,0) {$\begin{bmatrix}
1 & 1 & 1\\
1 &  1 & 1\\
1  &  1 & 1
\end{bmatrix}$};
  \node (c2) at (9,1.5) {$\begin{bmatrix}
 2 & 0 & 1\\
1 &  2 & 0\\
0  &  1 &  2
\end{bmatrix}$};
  \node (c3) at (9, -1.5) {$\begin{bmatrix}
1 &  0 & 2\\
1 &  2 & 0\\
1  & 1 & 1
\end{bmatrix}$ };
  \node (d1) at (12,0) {$\begin{bmatrix}
2 & 1 & 1\\
1 &  2 & 1\\
1 &  1 & 2
\end{bmatrix}$};
  \node (d2) at (12,1.5) {$\begin{bmatrix}
2 & 0 & 2\\
2 &  2 & 0\\
0  & 2 & 2
\end{bmatrix}$};
  \node (d3) at (12,-1.5) {$\begin{bmatrix}
1 & 2 & 1\\
1 &  2 & 1\\
2 & 0 & 2
\end{bmatrix}$};
  \node (e1) at (15,0) {$\begin{bmatrix}
 2 & 1 & 2\\
2 &  2 & 1\\
1 & 2  & 2
\end{bmatrix}$};
\node (max) at (18, 0) {$\begin{bmatrix}
2 & 2 & 2\\
2 &  2 & 2\\
2  & 2 & 2
\end{bmatrix}$};
  \draw  (min) -- (a1)  (a1) -- (b1)  (a1) -- (b2) 
  (a1) -- (b3)  (b1) -- (c1) (b2) -- (c2) (b1) -- (c2) (b1) -- (c3)
  (b3) -- (c3) (c1) -- (d1) (c2) -- (d1) (c2) -- (d2) (c3) -- (d1)
  (c3) -- (d3) (d1) -- (e1) (d2) -- (e1) (d3) -- (e1) (e1) -- (max);
  \end{tikzpicture}
  \vspace{20pt}
  }
  \caption{The poset of orbits in $M(3, 2)$ as rectangles (top) and semi-magic squares (bottom)}
  \end{sidewaysfigure}
  \vspace{5pt}

Since $v(M)$ is constant on orbits for $G$, the convolution formula may be simplified as
$$v(sJ) =  \sum\limits_{M}\  o_M\ v(M)v(sJ-M),$$
where $o_M$ denotes the size of the orbit for $M$, and the sum is now over a set or representatives for each orbit in $M(3, s)_k$.  For each rank pair in $M(3, 2)$, we obtain

\begin{eqnarray}
 Ranks \  \ \  0, \ 6 &:& \ \ 900\\
\notag Ranks \ \ \  1,\ 5 &:& \ \ 900 \ =  \ 6 \cdot 1 \cdot 150\\
\notag Ranks \ \ \  2,\ 4 &:&  \ \  900 \ = \ 6 \cdot 1\cdot 6\ \ \ \ \ +\ \ \  6\cdot 2\cdot 36\ \ \ +\ 9 \cdot 2 \cdot 24\\
\notag Rank \ \ \quad\ \  3 &:& \ \  900 \ =  \ 12 \cdot 3 \cdot 3\ \ \  +\ \ \ 1 \cdot 12\cdot 12\ +\ 18 \cdot 6 \cdot 6.
\end{eqnarray}
\end{example}

For larger $s$,  orbit count data for reduced squares is found in Table 1 of \cite{Do3}.  Both orbit types for size 36 and the type for size 72 first appear in $M(3, 3),$ $M(3, 4)$, and $M(3, 5),$ respectively.  For instance, consider the examples

{\small
$$
\begin{ytableau}
\  2 & 1 & 0\\
\ 1 &  0 & 0
\end{ytableau}\ \to \ 
 \begin{bmatrix}
 2 & 0 & 2\\
1 &  3 & 0\\
1 & 1  & 2
\end{bmatrix},\quad
 \begin{ytableau}
\  2 & 1 & 0\\
\ 2 &  1 & 0
\end{ytableau} \ \to \ 
 \begin{bmatrix}
2 & 1 & 3\\
2 &  4 & 0\\
2 & 1 & 3
\end{bmatrix}, \quad
\begin{ytableau}
\  3 & 1 & 0\\
\  2 &  1 & 0
\end{ytableau}\ \to\ 
\begin{bmatrix}
4 & 0 & 3\\
1 &  5 & 1\\
2  & 2 & 3
\end{bmatrix}.
$$ 
}

\section{Clebsch-Gordan coefficients and Regge Symmetries}

We recall the representation-theoretic formulation and indexing from \cite{Do1}.  For the Lie algebra $\frak{sl}(2, \mathbb{C})$, let $V(N)$ denote the irreducible representation space with  highest weight $N\ge 0.$ Variable names below reflect indexing in the Clebsch-Gordan decomposition and define the un-normalized Clebsch-Gordan coefficient $c_{m, n, k}(i, j).$  With suitably chosen highest weight vectors $\phi_N$,

$$V(m+n-2k) \subseteq V(m)\otimes V(n),$$
\begin{equation}f^{t-k}\ \phi_{m+n-2k}\ \ =\ \ \sum\limits_{i+j=t} \ c_{m,n,k}(i, j)\ \  f^i\phi_m\otimes f^j\phi_n.\end{equation}
Note that the weight of either side of the equality is $m+n-2t.$ In \cite{Do3}, we simply use the functional notation $C(M)$ for the coefficient.  See (7.4) below for a dictionary of indices, including 3-$j$ notation.

Returning to Corollary 5.3, we establish a functional connection between path numbers and the Clebsch-Gordan coefficients above. To each $M$ in $M(3)$, we assign a polynomial $F(M, z)$ and describe the behavior of $F$ when indices change according to the action of $G$. At $z=1$, we recover the invariance of $v(M)$ with respect to $G$; at $z=-1,$ we obtain an un-normalized analogue of the 3-$j$ symbol and an efficient derivation of the 72 Regge symmetries.

\begin{definition} Suppose ${\bf a}$ is the upshifted representative for $M$ in $M(3)$. Define

\begin{equation} F(M, z)\  \ = \ \sum\limits_{t=0}^{m_0} \begin{pmatrix} \rho(M) \\  a_1-t,\ a_2-t,\ a_3-t,\ a_4+t,\ a_5+t,\ a_6+t \end{pmatrix} z^t,\end{equation}
where $m_0\ = \ \min(a_1, a_2, a_3).$ 
\end{definition}
Alternatively, we have
\begin{align}
\notag F(M, z) \ &= \ \begin{pmatrix} \rho(M) \\ a_1, a_2, a_3, a_4, a_5, a_6 \end{pmatrix}\sum_{t} \frac{(a_1)_t (a_2)_t (a_3)_t}{(a_4+t)_t(a_5+t)_t(a_6+t)_t}\ z^t\\
& =\ \begin{pmatrix} \rho(M) \\ a_1, a_2, a_3, a_4, a_5, a_6 \end{pmatrix} {}_3F_2(-a_1, -a_2, -a_3; a_4+1, a_5+1, a_6+1; -z), 
\end{align}
where 
$(a)_t$ is the shifted factorial from Corollary 5.3.

It will be convenient to compare to the notation of (7.1);  we also include the 3-$j$ symbol notation, using the convention of \cite{Vi}, III.8, formula (9).  With  $m'=m+n-i-j-k$,
\begin{align}
\notag M \ &=\ \begin{bmatrix} a_1 +a_6 & a_3 + a_5 & a_2 + a_4\\ a_2 + a_5 & a_1 + a_4 & a_3 + a_6 \\ a_3 + a_4 & a_2 + a_6 & a_1 + a_5  \end{bmatrix}\\ 
&=\  \begin{bmatrix} n-k & m-k & k\\ i & j & m' \\ m-i & n-j & i+j-k  \end{bmatrix} \\  
\notag &=\  \begin{bmatrix} -j_1+j_2+j_3 & j_1-j_2+j_3 & j_1+j_2-j_3\\ j_1-m_1 & j_2-m_2 & j_3-m_3 \\ j_1+m_1 & j_2+m_2 & j_3+m_3  \end{bmatrix}.
\end{align}

There are many equivalent formulas for Clebsch-Gordan coefficients; we choose Theorem 9.2 of \cite{Do1}, with variables from the tensor product formulation:
\begin{equation}C(M) \ =\ \sum\limits_t (-1)^t  \begin{pmatrix} i+j-k \\ i-t \end{pmatrix}\begin{pmatrix} m-i \\ k-t \end{pmatrix}\begin{pmatrix} n-j \\ t \end{pmatrix}.\end{equation}

\begin{theorem}[Reciprocity for Clebsch-Gordan coefficients]  Suppose ${\bf a}$ is the upshifted representative for $M$ in $M(3)$ with $m_0=\min(a_1, a_2, a_3).$  Then 
$$F(M, 1)\ =\ v(M)$$
and 
\begin{equation}F(M, -1)\ = \  (-1)^{a_2+m_0}\ \begin{pmatrix} \rho(M) \\ a_1+a_5,\ a_2+a_6, \ a_3+a_4 \end{pmatrix}\ C(M).\end{equation}
\end{theorem}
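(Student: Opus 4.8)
The plan is to establish the two identities separately. The claim $F(M,1)=v(M)$ is immediate: setting $z=1$ in Definition 7.2 reduces (7.6) to formula (5.1) from Theorem 5.1, so nothing further is needed. The substance lies in evaluating $F(M,-1)$ and matching it against the Clebsch-Gordan formula (7.5). First I would start from the ${}_3F_2$ form in (7.7), setting $z=-1$, which yields
\begin{equation}
F(M,-1)\ =\ \begin{pmatrix} \rho(M) \\ a_1, a_2, a_3, a_4, a_5, a_6 \end{pmatrix}\ {}_3F_2(-a_1,-a_2,-a_3;\, a_4+1, a_5+1, a_6+1;\, 1).
\end{equation}
Since the upshifted representative forces one of $a_4, a_5, a_6$ to vanish, this is a terminating, Saalsch\"utzian (balanced) ${}_3F_2$ at argument $1$, the classical regime where closed-form evaluation is available.

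Next I would translate the indices using the dictionary (7.4), which identifies $a_1,\dots,a_6$ with the tensor-product variables $i,j,k,m,n,m'$ (and hence with the matrix entries). The target is the right-hand side of (7.8): a single trinomial coefficient $\binom{\rho(M)}{a_1+a_5,\,a_2+a_6,\,a_3+a_4}$ times $C(M)$, carrying the sign $(-1)^{a_2+m_0}$. My strategy is to expand $C(M)$ from (7.5) as its own alternating sum of products of binomials, multiply by that trinomial prefactor, and compare the resulting sum term-by-term against the ${}_3F_2$ series extracted from (7.6). Concretely, I would rewrite each side as a sum over $t$, express both the multinomial-coefficient summands of (7.6) at $z=-1$ and the binomial-product summands of (7.5) in terms of shifted factorials (or equivalently ordinary factorials), and check that after pulling out the common prefactor and the overall sign, the two summands agree index-by-index. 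The reindexing — reconciling the summation variable $t$ of (7.6), which runs over $0\le t\le m_0$, with the variable $t$ in the Clebsch-Gordan sum (7.5) — is where the sign $(-1)^{a_2+m_0}$ and the precise placement of the trinomial coefficient are pinned down.

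The hard part will be the bookkeeping of signs and factorials under the dictionary substitution: verifying that the prefactor multinomial $\binom{\rho(M)}{a_1,\dots,a_6}$ of (7.6) times the signed ${}_3F_2$ summands collapses exactly into the trinomial $\binom{\rho(M)}{a_1+a_5,a_2+a_6,a_3+a_4}$ times the summands of (7.5). I expect this to hinge on a factorial identity that redistributes the six lower parts $a_1,\dots,a_6$ into the three coarser parts $a_1+a_5, a_2+a_6, a_3+a_4$ while absorbing the leftover factors into the binomials of $C(M)$; the alternating sign emerges from the $z=-1$ evaluation combined with the $(-1)^t$ in (7.5). One cleanly avoids invoking any external ${}_3F_2$ summation theorem by instead reducing both sides to the same finite alternating sum — this is the ``direct method'' the introduction advertises. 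I would therefore organize the proof as: (i) record $F(M,1)=v(M)$ in one line; (ii) write (7.6) at $z=-1$ as a signed sum; (iii) substitute the dictionary and regroup factorials to exhibit the trinomial prefactor; (iv) match the residual sum to (7.5), tracking the overall sign to confirm the factor $(-1)^{a_2+m_0}$.
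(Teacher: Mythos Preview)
Your plan is correct and mirrors the paper's own argument: the paper handles $F(M,1)=v(M)$ by citing Theorem~5.1, then for $F(M,-1)$ substitutes the dictionary (7.4) into (7.5), reindexes via $t\mapsto t+a_2$ followed by $t\mapsto -t$, and rearranges factorials term-by-term---exactly your steps (ii)--(iv). One caveat: your aside that the resulting ${}_3F_2$ is Saalsch\"utzian is not correct (the balance condition $1-a_1-a_2-a_3=a_4+a_5+2$ fails in general), but since you explicitly do not invoke a summation theorem and instead match the sums directly, this remark is harmless to the argument.
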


\begin{proof} The first equation is Theorem 5.1.   For the second equation,  direct substitution gives
$$C(M)\ =\ \sum\limits_t (-1)^t  \begin{pmatrix} a_1 +a_5 \\ a_2+a_5-t \end{pmatrix}\begin{pmatrix} a_3+a_4 \\ a_2+a_4-t \end{pmatrix}\begin{pmatrix} a_2+a_6 \\ t \end{pmatrix}.$$
With the changes of variable $t \mapsto t + a_2$ followed by $t \mapsto -t$, one obtains the second equality by rearranging factorials. The second change of variable introduces the factor of $(-1)^{m_0}.$
\end{proof}

\noindent {\bf Remark.}  In \cite{LP}, the authors essentially identify the multinomial sum $F({\bf a}, -1)$ in the 3-$j$ notation of \cite{Vi}.  We describe using the notation of (7.1).  Random walks in the plane from $(0, 0)$ to $(m-2i, n-2j)$ are given using six possible steps:
$$\pm (1, -1),\quad \pm (1, 0), \quad \pm(0, 1),$$
with steps from each pair allowed $k$, $m-k$, and $n-k$ times, respectively.  All solutions to these conditions are of the form
\begin{align}
\notag(m-2i, n-2j) &= t(-1, 1)+(k-t)(1, -1)+(i-t)(-1, 0) + (m-k-i+t)(1, 0)\\
 \notag &\ \  \ \ +  (j-k+t)(0, -1) + (n-j-t)(0, 1). 
\end{align}
Of course, these five parameters determine a unique semi-magic square, and this random walk model is equivalent to the lattice path model of Section 5. 

Another combinatorial approach to $F(M, -1)$ using semi-magic squares appears in \cite{Lo1}, section 1.4, in particular (1.218), (1.221), and (11.261).  In this case, Clebsch-Gordan coefficients are derived directly from solid harmonics for $SU(2)$ using determinants and Rota's evaluation operation. Effectively, these methods enable a reworking of the theory of coupling of angular momentum in terms of semi-magic squares, bypassing techniques from Lie theory and differential equations.
\vspace{5pt}

We now consider how $F(M, z)$ transforms when indices are permuted by elements of $G$.  

\begin{proposition}  Suppose $g$ in $G$ sends $M$ to $M'$ and ${\bf a}$ to $g\cdot {\bf a}$.  If $g$ preserves each of the index sets $\{ 1, 2,3 \}$ and $\{ 4, 5, 6\}$,
then 
$$F(M, z) \ = \ F(M', z).$$
Otherwise
$$F(M, z) \ = \ z^{m_0}F(M', 1/z)$$
with $M'$ represented by the upshifted sextuple
$${\bf a'}\ =\ g\cdot{\bf a} + m_0(1,\ 1,\ 1, -1, -1, -1).$$
\end{proposition}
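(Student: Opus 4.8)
The plan is to split the argument according to whether the element $g$ swaps the two rows of the rectangle. The subgroup $H \cong S_3 \times S_3$ of order $36$ consists of exactly those $g$ that preserve both index sets $\{1,2,3\}$ and $\{4,5,6\}$, while the remaining $36$ elements form the nontrivial coset $H\tau$, where $\tau$ is the row swap $(14)(25)(36)$. The two cases of the proposition correspond precisely to this dichotomy.

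First I would dispose of the case $g \in H$. Such a $g$ permutes $(a_1,a_2,a_3)$ among themselves and $(a_4,a_5,a_6)$ among themselves, so the condition $\min(a_4,a_5,a_6)=0$ is preserved, $g\cdot\mathbf{a}$ is again upshifted, and $m_0$ is unchanged. Since the multinomial coefficient in the definition of $F$ is symmetric in its lower arguments and the shift by $t$ is applied uniformly within each of the two triples, every summand is fixed, giving $F(M,z)=F(M',z)$ term by term.

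For the second case I would first pin down the upshifted representative of $M'=g\cdot M$. Writing $g=h\tau$ with $h\in H$, the sextuple $g\cdot\mathbf{a}$ has its first three entries equal to a permutation of $(a_4,a_5,a_6)$ and its last three equal to a permutation of $(a_1,a_2,a_3)$. Because $\min(a_4,a_5,a_6)=0$ while $\min(a_1,a_2,a_3)=m_0$, the new bottom row need not contain a zero, so $g\cdot\mathbf{a}$ may fail to be upshifted; applying the single relation (3.1) to transfer $m_0$ from the bottom row to the top row produces the upshifted representative $\mathbf{a}'=g\cdot\mathbf{a}+m_0(1,1,1,-1,-1,-1)$, as claimed. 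One then checks at once that $\rho(M')=\rho(M)$ and, crucially, that $m_0'=\min(a_1',a_2',a_3')=\min(a_4,a_5,a_6)+m_0=m_0$.

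Finally I would establish the functional equation by the change of summation index $s=m_0-t$ in $F(M',z)$. Under this substitution the upper shifts $a_i'-s$ become a permutation of $a_4+t,a_5+t,a_6+t$ and the lower shifts $a_j'+s$ become a permutation of $a_1-t,a_2-t,a_3-t$, so by symmetry of the multinomial coefficient the $t$-th summand of $F(M,z)$ and the corresponding summand of $F(M',z)$ carry the same coefficient, while the monomial $z^s=z^{m_0-t}$ accounts for both the factor $z^{m_0}$ and the reciprocal argument $1/z$. Reading the two sums against each other yields $F(M,z)=z^{m_0}F(M',1/z)$. I expect the only real obstacle to be bookkeeping: confirming that the range $0\le t\le m_0$ for $M$ matches $0\le s\le m_0'$ for $M'$ (which rests on the equality $m_0'=m_0$ just noted) and tracking the two triple-permutations carefully enough that the symmetry of the multinomial coefficient can be invoked cleanly.
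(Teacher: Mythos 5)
Your proposal is correct and follows essentially the same route as the paper's (much terser) proof: the case $g$ preserving both rows is immediate from the symmetry of the multinomial coefficient, and the row-swapping case amounts to observing that the summation order is reversed, which your substitution $s = m_0 - t$ makes explicit. Your verification that $\mathbf{a}' = g\cdot\mathbf{a} + m_0(1,1,1,-1,-1,-1)$ is upshifted with $m_0' = m_0$ is exactly the bookkeeping the paper leaves implicit in the phrase ``the new upshifted representative reflects the reversal of degrees in $z$.''
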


\begin{proof}  Using Definition 7.1, the first assertion is clear.   On the other hand, interchanging the noted index sets reverses the order of the summation.  Although the coefficients in the sum are unchanged, the new upshifted representative reflects the reversal of degrees in $z$.
\end{proof}

Of course, when $z=1$, the proposition just  restates the invariance of $v(M)$ under $G$.  On the other hand, if $z=-1$, we obtain an un-normalized analogue of the 3-$j$ symbol;  under the group action, $F(M, -1)$ is invariant up to sign. For the un-normalized Clebsch-Gordan coefficients, we have the following algorithm for computing Regge symmetries.  

\begin{corollary}  Let $g$ be an element of $G$.   To compute the effect of $g$ on $C(M)$, 
\begin{enumerate}
\item denote $g$ with respect to the permutation notation for $M$, 
\item select the corresponding equation from Proposition 7.3 and evaluate at $z=-1,$ and
\item rewrite using either the tensor product or 3-$j$ symbol notation.
\end{enumerate}  
\end{corollary}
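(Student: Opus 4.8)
The plan is to read Corollary 7.4 as the bookkeeping consequence of chaining Theorem 7.2 with Proposition 7.3 and back through Theorem 7.2. The three listed steps correspond exactly to: translating $g$ into its action on the sextuple ${\bf a}$, transporting $F(M, z)$ to $F(M', z)$ via the transformation law, and converting the resulting polynomial identity at $z=-1$ back into Clebsch--Gordan (or $3$-$j$) language through the dictionary (7.4). So the task is to justify that this composite is well-defined and produces the correct un-normalized Regge symmetry relating $C(M)$ and $C(M') = C(g\cdot M)$.

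First I would invert the reciprocity formula. Theorem 7.2 gives
$$F(M, -1)\ =\ (-1)^{a_2 + m_0} \begin{pmatrix} \rho(M) \\ a_1 + a_5,\ a_2 + a_6,\ a_3 + a_4 \end{pmatrix} C(M),$$
so $C(M)$ is determined by $F(M, -1)$ up to the explicit sign $(-1)^{a_2+m_0}$ and the nonzero trinomial factor. This isolates the analytic content of the symmetry inside $F$, where Proposition 7.3 can act. Next I would apply that proposition at $z = -1$. If $g$ preserves each of the index blocks $\{1,2,3\}$ and $\{4,5,6\}$, then $F(M, -1) = F(M', -1)$ directly. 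Otherwise $F(M, z) = z^{m_0} F(M', 1/z)$, and evaluating at $z = -1$ yields $F(M, -1) = (-1)^{m_0} F(M', -1)$, since $1/z = -1$ there. In either case I obtain $F(M, -1) = \pm F(M', -1)$ with an explicit sign, where $M'$ is read off from the upshifted sextuple ${\bf a}' = g\cdot {\bf a} + m_0(1, 1, 1, -1, -1, -1)$ prescribed by the proposition.

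Finally I would apply Theorem 7.2 a second time, now to $M'$, to rewrite $F(M', -1)$ in terms of $C(M')$, and then use (7.4) to express both sides in tensor-product or $3$-$j$ notation, which is precisely step (3). Composing the two reciprocity factors with the sign supplied by Proposition 7.3 produces the desired relation between $C(M)$ and $C(M')$, showing that the algorithm returns the correct Regge symmetry.

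The step I expect to be the main obstacle is the careful evaluation of the second reciprocity factor in the index-reversing case. The sign exponent $a_2' + m_0'$ and the trinomial coefficient attached to $M'$ must be computed with respect to the \emph{new} upshifted representative ${\bf a}'$, not $g\cdot{\bf a}$, and one must verify that $m_0' = \min(a_1', a_2', a_3')$ agrees with the quantity $m_0$ appearing in the prefactor $z^{m_0}$. Reconciling these exponents, together with confirming that the trinomial factors for $M$ and $M'$ transform consistently under $g$ so that the net change reduces to the advertised Regge sign-and-ratio, is where the genuine verification lies; the remainder is routine symbol-pushing already packaged in Theorem 7.2 and Proposition 7.3.
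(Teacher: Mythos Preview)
Your proposal is correct and matches the paper's approach exactly: the paper does not give an explicit proof of this corollary, treating it as the immediate composite of Theorem~7.2 and Proposition~7.3 (and then illustrating with Example~7.5), which is precisely the chain you describe. Your flagged bookkeeping point about $m_0'$ resolves cleanly, since when $g$ swaps the index blocks the original upshifted ${\bf a}$ has some $a_t=0$ for $t\in\{4,5,6\}$, whence $m_0'=\min(a_4+m_0,a_5+m_0,a_6+m_0)=m_0$.
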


\begin{example}  Consider the transformation that switches rows 1 and 3 in $M$.  The corresponding permutation is $(14)(25)(36)$, which gives $g\cdot {\bf a} =  (a_4, a_5, a_6, a_1, a_2, a_3)$ and
$${\bf a'} \ = \ (a_4+m_0,\ a_5+m_0,\ a_6+m_0,\ a_1-m_0,\ a_2-m_0, \ a_3-m_0).$$
Now
$$F(M, -1)\ =\ (-1)^{m_0} F(M', -1),$$
or
$$\ \begin{pmatrix} \rho(M) \\ a_1+a_5,\ a_2+a_6, \ a_3+a_4 \end{pmatrix}\ C(M)\ =\ (-1)^{a_2+a_5}\ \begin{pmatrix} \rho(M) \\ a_4+a_2,\ a_5+a_3, \ a_6+a_1 \end{pmatrix}\ C(M').$$ 
Converting to tensor product notation confirms formula (8.4) from \cite{Do1}:
\begin{equation*} \begin{pmatrix} m+n-k \\ k',\ n-j,\ m-i \end{pmatrix} c_{m, n, k}(i, j)\ = \  (-1)^i\  \begin{pmatrix}m+n-k \\ k,\ m-k,\ n-k \end{pmatrix}\ c_{m', n', k'}(i, j)\end{equation*}
where $m'=n-k+i,\ n'=m-k+j,\ $ and $k'=i+j-k$. 
\end{example}

\section{Endnotes}
Early results on  semi-magic squares appear in the landmark work of MacMahon \cite{PMM}, who gave the first formula for enumerating such squares of size three with fixed line sum. In addition to the Birkhoff and von Neumann references, see also \cite{BRo},  \cite{BZ},   \cite{Bo},  \cite{Lo1}, \cite{Lo2}, \cite{StCC} and \cite{StE}.  For the theory of hypergeometric functions and series, see, for instance, \cite{AAR} or \cite{Vi}. The survey \cite{JCP}, an evaluation of the work of Jacques Raynal,  includes both historical notes on Clebsch-Gordan coefficients and 3-$j$ symbols in quantum theory and an extensive bibliography. Our use of wreath products is elementary, essentially adapting the brief treatment in \cite{StC}.   For combinatorial reciprocity, for instance, as a feature of Ehrhart theory, see \cite{BRo}, \cite{BSa}, and \cite{StE}.

After posting a first draft of this work, we discovered section 1.4 of \cite{Lo1}, which arrives at a similar goal of Clebsch-Gordan coefficients as a theory of semi-magic squares, and typically our section 7 recasts some of the technique found there. Surprisingly, the two programs are mostly disjoint and beg reconciliation.    Together \cite{Lo1} and \cite{Lo2} put forth a coherent framework of combinatorial themes related to symmetry in quantum physics beyond just Clebsch-Gordan theory.

\bibliographystyle{amsplain}

\end{document}